\title[Quadratic nonresidues below the Burgess bound]
      {Quadratic nonresidues below the Burgess bound}
\author[W.~D.~Banks]{William D.~Banks}  
\address{Department of Mathematics, University of Missouri, 
Columbia, MO 65211 USA}
\email{bankswd@missouri.edu} 
\author[V.~Z.~Guo]{Victor Z.~Guo} 
\address{Department of Mathematics, University of Missouri, 
Columbia, MO 65211 USA}
\email{zgbmf@mail.missouri.edu} 
\date{\today}
\begin{document}

\begin{abstract}
For any odd prime number $p$, let $(\cdot|p)$ be the Legendre symbol,
and let $n_1(p)<n_2(p)<\cdots$ be the sequence of positive nonresidues
modulo $p$, i.e., $(n_k|p)=-1$ for each $k$.
In 1957, Burgess showed that the upper bound $n_1(p)\ll_\eps p^{(4\sqrt{e})^{-1}+\eps}$ holds for
any fixed $\eps>0$. In this paper, we prove that the stronger bound
$$
n_k(p)\ll p^{(4\sqrt{e})^{-1}}\exp\big(\sqrt{e^{-1}\log p\log\log p}\,\big)
$$
holds for all odd primes $p$, where the implied constant is absolute, provided that
$$
k\le p^{(8\sqrt{e})^{-1}}
\exp\big(\tfrac12\sqrt{e^{-1}\log p\log\log p}-\tfrac12\log\log p\big).
$$
For fixed $\eps\in(0,\frac{\pi-2}{9\pi-2}]$ we also show that
there is a number $c=c(\eps)>0$ such that for all odd primes $p$
and either choice of $\theta\in\{\pm 1\}$, there are
$\gg_\eps y/(\log y)^\eps$ natural numbers
$n\le y$ with $(n|p)=\theta$ provided that
$$
y\ge p^{(4\sqrt{e})^{-1}}\exp\big(c(\log p)^{1-\eps}\big).
$$
\end{abstract}

\maketitle


\vskip.5in

\section{Introduction}

For any odd prime $p$, let $n_1(p)$ denote the least positive quadratic
nonresidue modulo $p$; that is,
$$
n_1(p):=\min\{n\in\NN:(n|p)=-1\},
$$
where $(\cdot|p)$ is the \emph{Legendre symbol}.
The first nontrivial bound on $n_1(p)$ was given by
Gauss~\cite[Article~129]{Gauss}, who showed that
$n_1(p)<2\sqrt{p}+1$ holds for every prime $p\equiv 1\pmod 8$.
Vinogradov~\cite{Vino} proved that the bound
$n_1(p)\ll_\eps p^{(2\sqrt{e})^{-1}+\eps}$ holds for any $\eps>0$, and later,
Burgess~\cite{Burg0} extended this range by showing that the bound
\begin{equation}
\label{eq:burg-nr}
n_1(p)\ll_\eps p^{(4\sqrt{e})^{-1}+\eps}
\end{equation}
holds for every fixed $\eps>0$;
this result has not been improved since 1957.

The bound \eqref{eq:burg-nr} implies that the inequality
$n_1(p)\le p^{\frac{1}{4\sqrt{e}}+f(p)}$ holds for all odd primes $p$
with \emph{some} function $f$ such that $f(p)\to 0$ as $p\to\infty$.
Our aim in this note is to improve the bound \eqref{eq:burg-nr}
and to study quadratic nonresidues that lie below
$p^{(4\sqrt{e})^{-1}+\eps}$ for any fixed $\eps>0$.
To this end, let $n_1(p)<n_2(p)<\cdots$ be the sequence of positive nonresidues modulo $p$.

\begin{theorem}
\label{thm:many k}
The bound
\begin{equation}
\label{eq:nkpbnd}
n_k(p)\ll p^{(4\sqrt{e})^{-1}}
\exp\big(\sqrt{e^{-1}\log p\log\log p}\,\big)
\end{equation}
holds for all odd primes $p$ and all positive integers
$$
k\le p^{(8\sqrt{e})^{-1}}
\exp\big(\tfrac12\sqrt{e^{-1}\log p\log\log p}-\tfrac12\log\log p\big),
$$
where the implied constant in \eqref{eq:nkpbnd} is absolute.
\end{theorem}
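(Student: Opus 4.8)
The plan is to run the classical Vinogradov--Burgess argument in quantitative, contrapositive form. Write $x_0:=p^{(4\sqrt e)^{-1}}\exp(\sqrt{e^{-1}\log p\log\log p})$ and let $k_0$ denote the displayed upper bound for $k$; the only facts about these quantities we shall use are that $k_0^2=x_0/\log p$ and that $\log x_0=(\log y)/\sqrt e$, where $y:=p^{1/4}\exp(\sqrt{\log p\log\log p})$. Fix $k\le k_0$, suppose for contradiction that $n_k(p)>Cx_0$ for a large absolute constant $C$ to be chosen at the end, and put $x:=n_k(p)$ and $u:=(\log y)/\log x$; then $x>x_0$ gives $u<\sqrt e<2$, and if in fact $x\ge y$ (so $u\le1$) the estimates below already yield a contradiction, so we may assume $1<u<\sqrt e$. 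Since $1-\log u$ equals $\tfrac12$ exactly at $u=\sqrt e$ and is decreasing, it will suffice to prove that $1-\log u<\tfrac12(1+O(1/\log p))$: indeed this gives $u>\sqrt e\,(1-O(1/\log p))$, hence $\log x<(\log y)/\sqrt e+O(1)=\log x_0+O(1)$, contradicting $n_k(p)>Cx_0$ as soon as $C$ exceeds the implied constant.

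The first step is elementary, and is exactly where the hypothesis $k\le k_0$ is used. If $q\le x$ is a prime with $(q|p)=-1$, then $m\mapsto qm$ injects $\{m\le x/q:(m|p)=1\}$ into the set $\mathcal N$ of nonresidues in $[1,x]$, while the inclusion injects $\{m\le x/q:(m|p)=-1\}$ into $\mathcal N$; since $x<p$ these two sets partition $\{m\le x/q\}$, so $2\,\#\mathcal N\ge\lfloor x/q\rfloor$. As $x=n_k(p)$ we have $\#\mathcal N=k$, hence $q\ge x/(2k+1)$; and since at most $k$ primes $\le x$ are nonresidues, $\sum_{q\le x,\,(q|p)=-1}q^{-1}\le 3k^2/x\le 3k_0^2/x_0=3/\log p$.

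Next, any positive integer whose prime factors are all quadratic residues $\le x$ is itself a quadratic residue coprime to $p$; hence, discarding from the $x$-smooth integers in $[1,y]$ those divisible by a nonresidue prime $\le x$ and writing $\Psi(y,x)$ for the number of $x$-smooth integers in $[1,y]$,
\[
\#\{n\le y:(n|p)=1\}\ \ge\ \Psi(y,x)-y\sum_{q\le x,\,(q|p)=-1}q^{-1}\ \ge\ \Psi(y,x)-\frac{3y}{\log p}.
\]
In the opposite direction, $\#\{n\le y:(n|p)=1\}=\tfrac12\bigl(\lfloor y\rfloor-\lfloor y/p\rfloor+\sum_{n\le y}(n|p)\bigr)\le\tfrac12 y+\tfrac12\bigl|\sum_{n\le y}(n|p)\bigr|$, and into this last term I would insert an \emph{explicit} form of Burgess's inequality with an \emph{absolute} implied constant --- of the shape $\bigl|\sum_{n\le N}\chi(n)\bigr|\ll N^{1-1/r}p^{(r+1)/(4r^2)}(\log p)^{1/(2r)}$, valid for all integers $r\ge1$ since $p$ is prime --- taken with $r$ of order $\sqrt{\log p/\log\log p}$. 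For $y=p^{1/4}\exp(\sqrt{\log p\log\log p})$ this choice makes the exponent of $p$ equal to $-\log\log p/\log p$ and the logarithmic factor $1+o(1)$, so $\bigl|\sum_{n\le y}(n|p)\bigr|\ll y/\log p$. Combining the two estimates, $\Psi(y,x)<\tfrac12 y\,(1+O(1/\log p))$. Finally, since $1<u<2$, a Mertens-type count gives $\Psi(y,x)=y(1-\log u)+O(y/\log x)\asymp y$; as $\log x>\log x_0\gg\log p$, this forces $1-\log u<\tfrac12(1+O(1/\log p))$, and we finish as in the first paragraph. The case $k=1$ already gives a fully explicit form of \eqref{eq:burg-nr}.

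The one genuinely delicate point here is the Burgess input: we need it with an absolute constant and with the right power of $\log p$, so that at the optimal $r\asymp\sqrt{\log p/\log\log p}$ the net saving is exactly of size $1/\log p$ and not $1/(\log p)^{1-o(1)}$. It is the matching of this $1/\log p$ against the $1/\log p$ contributed by the nonresidue primes in the second step that pins down both the constant $e^{-1}$ inside the exponential in \eqref{eq:nkpbnd} and the precise form of the threshold $k_0$; the rest is routine.
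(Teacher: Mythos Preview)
Your argument is correct, and it takes a genuinely different route from the paper's. Both proofs ultimately rest on the same two ingredients---Vinogradov's multiplicative idea and the explicit Burgess estimate at the threshold $y=p^{1/4}\exp(\sqrt{\log p\log\log p})$ (your Burgess step is exactly the paper's Proposition~\ref{prop:MchiN} with $c=1$)---but the organization differs substantially. The paper splits into cases according to the size of $N:=n_1(p)$: when $N\le B:=E^{1/2}(\log p)^{1/2}$ an elementary product trick manufactures $k$ nonresidues below $2kB\ll E$; when $N>B$ it first invokes Theorem~\ref{thm:y/(log y)^eps} (hence the Granville--Soundararajan spectrum machinery of \cite{GraSou1,GraSou2}) to secure a preliminary bound $M\le B^{5/2}<N^{5/2}$, and only then applies Vinogradov's lower bound for $\sum_{n\le x}(n|p)$ and plays it off against Proposition~\ref{prop:MchiN}. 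You bypass both the case split and the appeal to Theorem~\ref{thm:y/(log y)^eps} with a single observation: any nonresidue prime $q\le x:=n_k(p)$ satisfies $q\ge x/(2k+1)$, because each half of $\{m\le x/q\}$ injects into the $k$ nonresidues below $x$; hence $\sum_{q\le x,\,(q|p)=-1}q^{-1}\le 3k^2/x\le 3/\log p$ under the contrapositive hypothesis $x>x_0$. This estimate simultaneously replaces the paper's Case~1 and the use of Theorem~\ref{thm:y/(log y)^eps} in Case~2; from it you count residues below $y$ via $\Psi(y,x)$ and finish with Burgess at $y$. The payoff is a self-contained proof of Theorem~\ref{thm:many k} that is independent of Theorem~\ref{thm:y/(log y)^eps} and of \cite{GraSou1,GraSou2}; what the paper's approach buys is a unified framework in which both theorems flow from the same spectrum-of-multiplicative-functions viewpoint.
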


In a somewhat longer range, we establish the existence of
\emph{many} nonresidues.

\begin{theorem}
\label{thm:y/(log y)^eps}
Let $\eps\in(0,\xi]$ be fixed, where
$$
\xi\defeq\frac{\pi-2}{9\pi-2}=0.04344896\ldots.
$$
There is a number $c=c(\eps)>0$ such that
for all odd primes $p$ and either either choice of $\theta\in\{\pm 1\}$, we have
$$
\#\big\{n\le y:(n|p)=\theta\big\}\gg_\eps\frac{y}{(\log y)^\eps}
\qquad\big(y\ge p^{(4\sqrt{e})^{-1}}\exp\big(c(\log p)^{1-\eps}\big)\big),
$$
where the constant implied by $\gg_\eps$ depends only on $\eps$.
\end{theorem}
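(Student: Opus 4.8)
The plan is to work with the Legendre symbol $\chi=(\cdot|p)$ and to prove, for each fixed $\theta\in\{\pm1\}$, that $\#\{n\le y:\chi(n)=\theta\}\gg_\eps y/(\log y)^\eps$; since $\log y\asymp\log p$ this is the assertion of Theorem~\ref{thm:y/(log y)^eps}. Introduce two auxiliary parameters, a "smoothness level" $z=p^{(4\sqrt e)^{-1}}\exp\!\big(c_1(\log p)^{1-\eps}\big)$ and an "Burgess level" $N=p^{1/4}\exp\!\big(c_2(\log p)^{1-\eps}\big)$, where $c_1,c_2>0$ are absolute constants chosen with $c_2<\sqrt e\,c_1$. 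Taking the constant $c$ in the statement to be $\ge c_1$ ensures $z\le y$, and $y\le N$ automatically for large $p$ because $(4\sqrt e)^{-1}<\tfrac14$.

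\emph{Step 1 (many $z$-smooth integers with prescribed symbol below $N$).} Because $N\ge p^{1/4+\eta}$ with $\eta=c_2(\log p)^{-\eps}$, the Burgess character–sum estimate underlying \eqref{eq:burg-nr} — used with the parameter $r$ of size $\asymp1/\eta$, which is legitimate here since $\eps<\tfrac12$ — gives $\big|\sum_{n\le N}\chi(n)\big|=o\big(N/(\log p)^\eps\big)$, hence $\#\{n\le N:\chi(n)=\theta\}=\tfrac12N+o\big(N/(\log p)^\eps\big)$. On the other hand, the integers $n\le N$ having a prime factor exceeding $z$ number at most $\sum_{z<q\le N}\lfloor N/q\rfloor=N\big(\log\log N-\log\log z+o(1)\big)$ by Mertens, and the choice of $z$ and $N$ makes $\log\log N-\log\log z=\tfrac12-\kappa(\log p)^{-\eps}+o\big((\log p)^{-\eps}\big)$ with $\kappa=4\sqrt e\,c_1-4c_2>0$ (here one uses $\log N/\log z\to\sqrt e$ together with $\rho(\sqrt e)=\tfrac12$). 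Subtracting, the number of $z$-smooth $n\le N$ with $\chi(n)=\theta$ is $\gg N/(\log p)^\eps$; in particular, since such integers exist for $\theta=-1$, there is a nonresidue prime below $z$ (a quantitative form of Burgess's argument for $n_1(p)$).

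\emph{Steps 2 and 3 (descent to scale $y$, and fibres).} Given a $z$-smooth $n$ with $\chi(n)=\theta$ and $n>y$, I want a divisor $d=d(n)\mid n$ with $\chi(d)=\theta$ and $d$ close to $y$. Since all prime factors of $n$ are $\le z$, consecutive divisors of $n$ differ by a factor at most $z$, so a greedy sub‑product — building $d$ from the smallest prime factors of $n$, forcing in the least prime factor carrying the symbol $\theta$ (available because it is $\le z\le y$), and, should the resulting parity come out wrong, deleting one prime and re‑greedifying — yields such a $d$ with $\chi(d)=\theta$ and $d\in(y/z,y]$. This gives a map $n\mapsto d(n)$ from $\{z\text{-smooth }n\le N:\chi(n)=\theta,\ n>y\}$, still of size $\gg N/(\log p)^\eps$, into $\{m\le y:\chi(m)=\theta\}$, with $d(n)\mid n$. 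For the fibre over $m$ one has $\#\{n\le N:m\mid n,\ n\text{ is }z\text{-smooth}\}=\Psi(N/m,z)$, and if $m$ is within a bounded factor of $y$ then $N/m<z$ (as $(4\sqrt e)^{-1}>\tfrac14-(4\sqrt e)^{-1}$), so $\Psi(N/m,z)\ll N/y$; consequently $\#\{m\le y:\chi(m)=\theta\}\gg\big(N/(\log p)^\eps\big)\big/(N/y)=y/(\log p)^\eps$, as desired. The residue case $\theta=+1$ and the nonresidue case $\theta=-1$ run in parallel, the only asymmetry being the availability of the forced prime in the greedy step.

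\emph{The main obstacle} is exactly the proviso in Step 3 that $d(n)$ lie within a bounded factor of $y$: the greedy procedure a priori only places $d(n)$ in the window $(y/z,y]$, whose multiplicative length $z\approx p^{(4\sqrt e)^{-1}}$ is far too large, and the "mass" sitting over small values of $m$ comes precisely from integers of the shape $n=m\cdot(\text{large prime})$. One must show that these exceptional $n$ contribute only $o\big(N/(\log p)^\eps\big)$ to the domain (so they can be discarded), and for the rest iterate the descent or pad the greedy prefix with small residue primes until $d(n)$ lands near $y$ — this padding costs a sieve factor. Bookkeeping the several compounding $(\log p)$‑power losses (the Burgess saving just above $p^{1/4}$, the Mertens margin $\kappa$, the smooth‑number densities $\rho(\sqrt e)=\tfrac12$ and $\rho(\sqrt e-1)=1$ that govern the fibres, and the sieve loss from padding) and optimising all the parameters against one another is what ultimately pins down the admissible range $\eps\le\xi=\frac{\pi-2}{9\pi-2}$, the constant $\pi$ entering through the (essentially geometric) count of padding primes in that optimisation.
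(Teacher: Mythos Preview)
Your approach is genuinely different from the paper's and, as it stands, has a real gap at exactly the point you flag as ``the main obstacle''.

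The paper never performs a combinatorial descent from a Burgess scale $N\approx p^{1/4+o(1)}$ down to $y$. Instead it uses the identity
\[
\#\{n\le y:\chi(n)=\theta\}=\tfrac12 y\bigl(1+\theta M_\chi(y)+O(p^{-1})\bigr)
\]
and reduces everything to showing $|M_\chi(y)|\le 1-c_2(\log y)^{-\eps}$. This is obtained from a Granville--Soundararajan ``Lipschitz'' estimate (Proposition~\ref{prop:GraSou}, built on \cite[Propositions~1, 3.6, 4.4 and Theorem~5.1]{GraSou1}) that compares $M_\chi(x^\alpha)$ to $M_\chi(x)$ uniformly for $1/\sqrt e\le\alpha\le 1$, with an error $O((\log x)^{-\xi})$. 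Writing $y=x^\alpha$ with $\alpha$ just above $1/\sqrt e$ and bounding $M_\chi(x)$ by Burgess (Proposition~\ref{prop:MchiN}) gives the result. The constant $\xi=(\pi-2)/(9\pi-2)$ is \emph{not} the output of any parameter optimisation in the proof; it is inherited verbatim from the exponent $\eta=\tfrac14-\tfrac1{2\pi}$ in the error term of \cite[Proposition~1]{GraSou1} via the relation $\xi=\eta(1-\lambda)=2\lambda-1$.

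In your argument, Steps~1--2 are fine (and essentially reproduce the classical Vinogradov--Burgess idea that $z$-smooth integers with prescribed symbol are plentiful up to $N$). The failure is in Step~3. Your divisor $d(n)$ lands only in $(y/z,y]$, and since $z\approx y$ at the threshold this window is essentially all of $[1,y]$; the fibre bound $\Psi(N/m,z)\ll N/y$ is then false for small $m$ (indeed $\Psi(N/m,z)\asymp N/m$ when $N/m\le z$ does \emph{not} hold, and for $m$ bounded one gets fibres of size $\asymp N$). Your proposed remedies --- discarding exceptional $n$, padding with residue primes, iterating --- are not carried out, and the assertion that the bookkeeping ``ultimately pins down $\eps\le\xi$'' with ``$\pi$ entering through the count of padding primes'' has no visible mechanism: nothing in a smooth-number/sieve argument of this shape naturally produces the constant $\tfrac14-\tfrac1{2\pi}$, which in the literature arises from the structure theory of mean values of multiplicative functions, not from combinatorics of divisors. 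Without a concrete construction placing $d(n)$ within a bounded factor of $y$ for almost all $n$, together with an explicit accounting that limits the total loss to $(\log p)^{\xi}$, the proof is incomplete.
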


\bigskip\noindent\textbf{Acknowledgement.} We thank Kannan Soundararajan,
whose suggestion led to a significant sharpening of our Theorem~\ref{thm:many k}.

\section{Preliminaries}

Throughout the paper, we use the symbols $O$ and $\ll$ with their standard
meanings; any implied constants are \emph{absolute} unless otherwise specified
in the notation.

Throughout the paper, we denote
$$
\lambda\defeq\frac{5\pi-2}{9\pi-2}=0.52172448\ldots,\qquad
\eta\defeq\frac14-\frac1{2\pi}=0.09084505\ldots.
$$
The constant $\eta$ appears in Granville and Soundararajan~\cite[Proposition~1]{GraSou1},
which is one of our principal tools.
In view of the definition of $\xi$ in Theorem~\ref{thm:y/(log y)^eps}, we note that
the following relation holds:
\begin{equation}
\label{eq:relation}
\xi=\eta(1-\lambda)=2\lambda-1.
\end{equation}

In a series of papers, Burgess~\cite{Burg1,Burg2}
established several well known bounds on relatively short character sums
of the form
$$
S_\chi(M,N)\defeq\sum_{M<n\le M+N}\chi(n)\qquad(M,N\in\ZZ,~N\ge 1).
$$
Here, we use a slightly stronger estimate which holds for
characters of prime conductor; see Iwaniec and
Kowalski~\cite[Equation~(12.58)]{IwanKow}.
\begin{lemma}
\label{lem:IwanKow}
Let $\chi$ be a primitive Dirichlet character of prime conductor $p$.
For any integer $r\ge 1$ we have
$$
\big|S_\chi(M,N)\big|\le 30 N^{1-1/r}p^{(r+1)/4r^2}
(\log p)^{1/r}\qquad(M,N\in\ZZ,~N\ge 1).
$$
\end{lemma}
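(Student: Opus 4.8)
The plan is to prove Lemma~\ref{lem:IwanKow}, the Burgess bound on character sums of prime conductor, following the classical amplification-and-moments method. The core idea is that although we cannot control a single sum $S_\chi(M,N)$ directly, we \emph{can} control its average over many translates, because after shifting by a multiplicative amount the character values can be compared against a high even moment that is bounded by the Weil estimate for complete sums. First I would set up the shifting: for a parameter $h$ ranging over a set of primes $\ell\le L$ and integers $a$ in a range of length $A$, I would write $S_\chi(M,N)$ as an average of $S_\chi(M+a\ell, N)$ up to a small error, exploiting the fact that $\chi(\ell)\chi(n)=\chi(\ell n)$ lets one replace the interval by a dilated interval. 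The standard device is to count representations $m=n+a\ell$ with $n\in(M,M+N]$, $\ell$ prime in $(L/2,L]$, and $a\in(0,A]$, so that each residue class is hit roughly $AL/\log L$ times; this converts the sum into one over $\chi(m\bar\ell)$ weighted by the representation count.

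Next I would invoke H\"older's inequality to pass from the first power of the weighted sum to the $2r$-th moment. Writing $\nu(m)$ for the number of representations $m=n+a\ell$, one separates $\sum_m \nu(m)\chi(m\bar\ell)$ and applies H\"older with exponents $2r/(2r-1)$ and $2r$, so that the main sum is bounded by a product of $\big(\sum_m \nu(m)^{2r/(2r-1)}\big)^{1-1/2r}$ and $\big(\sum_m |\sum_\ell \chi(m\bar\ell)|^{2r}\big)^{1/2r}$. The first factor is an elementary counting estimate controlled by $AN$ and the divisor-type bounds on $\nu$; the crucial second factor is where the arithmetic of $\chi$ enters. Expanding the $2r$-th power gives a sum over $2r$-tuples $(\ell_1,\dots,\ell_{2r})$ of complete character sums $\sum_{m} \chi\big((m-a_1\ell_1)\cdots\big)$, which by the Weil bound (Riemann Hypothesis for curves over finite fields) is $O(r\sqrt{p})$ unless the tuple is \emph{diagonal}, in which case it contributes the full length $p$. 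Here the restriction to prime conductor $p$ is what makes the diagonal analysis clean, since it forces the congruences among the $\ell_i$ to be genuine equalities in the relevant range.

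The final step is to collect the two estimates and optimize. The diagonal terms contribute on the order of $L^r p$ (choosing $r$ pairs of equal primes), while the off-diagonal terms contribute $L^{2r}\cdot r\sqrt p$, and one checks the diagonal dominates precisely when $L$ is not too large relative to $p^{1/(2r)}$. Feeding the moment bound back through H\"older and dividing by the number of representations $AL$ yields, after choosing $A$ and $L$ to balance the resulting expression, the stated bound $|S_\chi(M,N)|\le 30\,N^{1-1/r}p^{(r+1)/4r^2}(\log p)^{1/r}$; the factor $(\log p)^{1/r}$ arises from counting primes $\ell\le L$ via Chebyshev-type estimates, and the explicit constant $30$ comes from tracking the numerical losses in H\"older and in the Weil error term. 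I expect the main obstacle to be the bookkeeping of the off-diagonal contribution: one must verify carefully that for prime conductor the only degenerate tuples are the diagonal ones, and that the non-diagonal complete sums really do satisfy the square-root cancellation of the Weil bound with a controllable polynomial-in-$r$ constant, since a careless treatment here either inflates the exponent of $p$ or destroys the clean constant. Since the statement we are asked to prove is quoted directly from Iwaniec and Kowalski~\cite[Equation~(12.58)]{IwanKow}, I would ultimately defer the delicate constant-tracking to that reference and present the amplification structure above as the conceptual proof.
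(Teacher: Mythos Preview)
The paper does not prove this lemma at all: it is stated with a direct citation to Iwaniec and Kowalski~\cite[Equation~(12.58)]{IwanKow} and no argument is given. Your proposal therefore goes well beyond what the paper does, since you outline the Burgess amplification-and-moments proof before ultimately deferring to the same reference; as a sketch of that method your outline is broadly correct, though a couple of details (the precise H\"older exponents used on the representation weight $\nu(m)$, and the stray parameter ``$h$'' that is introduced but never used) would need to be cleaned up if you actually wrote it out.
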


\begin{proposition}
\label{prop:MchiN}
Let $\chi$ be a primitive Dirichlet character of prime conductor $p$,
and put
$$
M_\chi(x)\defeq\frac1x\sum_{n\le x}\chi(n)\qquad(x\ge 1).
$$
Then, uniformly for $c\in[0,(\log p)^{1/3}]$ we have
$$
M_\chi(x)\ll(\log p)^{-c^2}
\qquad\big(x\ge p^{1/4}\exp\big(c\sqrt{\log p\log\log p}\,\big)\big).
$$
\end{proposition}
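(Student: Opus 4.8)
The plan is to bound $M_\chi(x)$ by splitting the sum $\sum_{n\le x}\chi(n)$ dyadically and applying the Burgess-type estimate of Lemma~\ref{lem:IwanKow} to each block, choosing the parameter $r$ optimally in terms of $x$ and $p$. Writing $x=p^{1/4}\exp(c\sqrt{\log p\log\log p})$ (it suffices to treat the smallest admissible $x$, since $M_\chi(x)$ for larger $x$ can be handled by partial summation or by absorbing the extra range, and in any case the bound is monotone in the sense that enlarging $x$ only helps after renaming $c$), the trivial-type piece $n\le p^{1/4}$ contributes $\ll p^{1/4}/x=\exp(-c\sqrt{\log p\log\log p})$, which is negligible compared with $(\log p)^{-c^2}$ as long as $c\le(\log p)^{1/3}$. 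For the dyadic blocks $N\asymp 2^j$ with $p^{1/4}\le N\le x$, Lemma~\ref{lem:IwanKow} gives
\[
\frac{|S_\chi(M,N)|}{x}\ll \frac{N^{1-1/r}p^{(r+1)/4r^2}(\log p)^{1/r}}{x}
= \frac{p^{(r+1)/4r^2}(\log p)^{1/r}}{N^{1/r}}\cdot\frac{N}{x}.
\]
Summing over the $O(\log x)$ blocks, the dominant term is $N\asymp x$, so up to the harmless factor $\log x$ we must bound $p^{(r+1)/4r^2}(\log p)^{1/r}x^{-1/r}$.

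The key step is the choice of $r$. Substituting $x=p^{1/4}\exp(c\sqrt{\log p\log\log p})$ gives
\[
p^{(r+1)/4r^2}(\log p)^{1/r}x^{-1/r}
= p^{(r+1)/4r^2-1/4r}(\log p)^{1/r}\exp\!\big(-\tfrac{c}{r}\sqrt{\log p\log\log p}\,\big)
= p^{(1-r)/4r^2}(\log p)^{1/r}\exp\!\big(-\tfrac{c}{r}\sqrt{\log p\log\log p}\,\big).
\]
Taking logarithms (base $e$), this is $\exp\big(\tfrac{1-r}{4r^2}\log p+\tfrac1r\log\log p-\tfrac{c}{r}\sqrt{\log p\log\log p}\big)$. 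The first term is negative and of size $-\log p/4r$, which we want to be at least as large in magnitude as the target $-c^2\log\log p$; balancing suggests $r$ roughly of order $\sqrt{\log p/\log\log p}$, and more precisely we choose $r=\big\lceil\sqrt{\log p/\log\log p}\,/(2c)\big\rceil$ (valid as a positive integer since $c\le(\log p)^{1/3}$ forces $r\ge 1$ for $p$ large, and small $p$ are absorbed into the implied constant). With this choice one checks that $\tfrac{1-r}{4r^2}\log p\le -\tfrac{1}{8r}\log p\le -\tfrac{c}{4}\sqrt{\log p\log\log p}$ for $p$ large, which already beats $-c^2\log\log p$ when $c\le(\log p)^{1/3}$; the cross term $-\tfrac{c}{r}\sqrt{\log p\log\log p}\asymp -2c^2\log\log p$ supplies precisely the required saving, and the positive term $\tfrac1r\log\log p\ll c\sqrt{\log p}\cdot\log\log p/\log p = o(c^2\log\log p)$ is swallowed. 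One then verifies that the exponent is $\le -c^2\log\log p+O(1)$, i.e. $M_\chi(x)\ll(\log p)^{-c^2}$, with the $\log x=O(\sqrt{\log p\log\log p})$ from the dyadic sum absorbed since it contributes only an additive $O(\log\log p)$ to the exponent.

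I expect the main obstacle to be making the optimization uniform over the full range $c\in[0,(\log p)^{1/3}]$ with a single integer choice of $r$: for $c$ near the top of the range, $r$ is bounded (near $\tfrac12(\log\log p)^{1/2}$ or even constant), and one must check the rounding in $\lceil\cdot\rceil$ does not destroy the inequality $\tfrac{1-r}{4r^2}\log p\le -c^2\log\log p$; for $c$ near $0$, $r$ is large, the $p^{(1-r)/4r^2}$ saving is weak, and one leans instead on the $\exp(-\tfrac{c}{r}\sqrt{\log p\log\log p})$ factor, so the two regimes need slightly different bookkeeping even though the same formula for $r$ works. A secondary technical point is handling $x$ strictly larger than the threshold: there one writes $x=p^{1/4}\exp(c'\sqrt{\log p\log\log p})$ with $c'\ge c$ (when $c'\le(\log p)^{1/3}$) and applies the above with $c'$ in place of $c$, noting $(\log p)^{-c'^2}\le(\log p)^{-c^2}$; and when $c'$ would exceed $(\log p)^{1/3}$ one simply truncates, using the range $c=(\log p)^{1/3}$ together with the trivial monotonicity $M_\chi$ inherits. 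These are routine but need to be stated carefully to get the claimed uniformity.
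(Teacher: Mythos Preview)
Your overall approach --- apply Lemma~\ref{lem:IwanKow} and optimize the integer parameter $r$ near $\tfrac{1}{2c}\sqrt{\log p/\log\log p}$ --- is exactly the paper's. However, you overcomplicate the argument and make an arithmetic slip that scrambles your exponent analysis.

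The dyadic decomposition, the separate treatment of $n\le p^{1/4}$, and the case distinction for ``larger $x$'' are all unnecessary. The paper applies the lemma \emph{once} to the full sum: for any integer $N\ge p^{1/4}z$ (with $z:=\exp(c\sqrt{\log p\log\log p})$),
\[
\big|M_\chi(N)\big|=N^{-1}\big|S_\chi(0,N)\big|\ll N^{-1/r}p^{(r+1)/4r^2}(\log p)^{1/r}\le p^{1/4r^2}\,z^{-1/r}(\log p)^{1/r},
\]
and this last expression is already independent of $N$, so every $x$ in the stated range is handled simultaneously.

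More importantly, your simplification $(r+1)/4r^2-1/(4r)=(1-r)/4r^2$ is wrong; the correct value is $1/4r^2$, which is \emph{positive}. Thus the $p$-term contributes $+\tfrac{\log p}{4r^2}$ to the exponent, not a negative term of size $-\tfrac{\log p}{4r}$, and the balancing heuristic you describe (which, incidentally, does not actually produce the $r$ you then write down) rests on the wrong quantity. With the correct sign the picture is simply that $\tfrac{\log p}{4r^2}$ is the cost and $\tfrac{c}{r}\sqrt{\log p\log\log p}$ is the gain; at $r=\big\lfloor\tfrac{1}{2c}\sqrt{\log p/\log\log p}\big\rfloor$ these are $c^2\log\log p$ and $2c^2\log\log p$ respectively, giving net exponent $-c^2\log\log p$ up to a lower-order term $O\big((\log\log p)^{3/2}(\log p)^{-1/6}\big)$, uniformly for $c\le(\log p)^{1/3}$. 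No separate bookkeeping for small versus large $c$ is required.
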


\begin{proof}
We can assume that $c>0$ else the result is trivial.
Let $z\defeq e^{c\sqrt{\log p\log\log p}}$.  For any integer
$N\ge p^{1/4}z$ we have by Lemma~\ref{lem:IwanKow}:
$$
\big|M_\chi(N)\big|=N^{-1}\big|S_\chi(0,N)\big|\ll
N^{-1/r}p^{(r+1)/4r^2}(\log p)^{1/r}\le
p^{1/4r^2}z^{-1/r}(\log p)^{1/r}
$$
for any integer $r\ge 1$.  We choose
$$
r\defeq\fl{\frac1{2c}\bigg(\frac{\log p}{\log\log p}\bigg)^{1/2}},
$$
where $\fl\cdot$ is the greatest integer function. Since $c\le(\log p)^{1/3}$
it follows that
\begin{align*}
p^{1/4r^2}z^{-1/r}(\log p)^{1/r}
&=\exp\bigg(\frac{\log p}{4r^2}-\frac{c(\log p\log\log p)^{1/2}}{r}
+\frac{\log\log p}{r}\bigg)\\
&=\exp\bigg(\!\!-c^2\log\log p+O\bigg(\frac{(\log\log p)^{3/2}}{(\log p)^{1/6}}\bigg)\bigg)\ll(\log p)^{-c^2}.
\end{align*}
This implies the stated bound.
\end{proof}

\section{Proof of Theorem~\ref{thm:y/(log y)^eps}}

Our proof of Theorem~\ref{thm:y/(log y)^eps}
relies on ideas of Granville and Soundararajan~\cite{GraSou1,GraSou2}.
We begin with a technical lemma.

\begin{lemma}
\label{lem:techlemma}
Let $g$ be a completely multiplicative function
such that $-1\le g(n)\le 1$ for all $n\in\NN$.  Let $x$ be large,
and suppose that $g(p)=1$ for all
$p\le y\defeq \exp\big((\log x)^\lambda\big)$.
Then, uniformly for $1/\sqrt{e}\le\alpha\le 1$, we have
$$
1-\tau(\alpha)+O\big((\log x)^{-\xi}\big)\le M_g(x^\alpha)
\le 1-\tau(\alpha)+\tfrac12\tau(\alpha)^2+O\big((\log x)^{-\xi}\big),
$$
where
$$
\tau(\alpha)\defeq\sum_{p\le x^\alpha}\frac{1-g(p)}{p}.
$$
\end{lemma}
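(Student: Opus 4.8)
The plan is to estimate $M_g(x^\alpha)$ by relating the mean value of $g$ to the local factors at primes, exploiting the hypothesis that $g(p)=1$ for all $p\le y=\exp((\log x)^\lambda)$. The starting point is the convolution identity: writing $L(s,g)=\sum_n g(n)n^{-s}$ as an Euler product and using that $g$ is completely multiplicative with $g(p)=1$ below $y$, one sees that $g$ is ``close to'' the function $\mathbf 1$ in a quantitative sense controlled by $\tau(\alpha)$. Concretely, I would use the standard mean-value machinery of Granville and Soundararajan: if $g=\mathbf 1 * h$ (Dirichlet convolution), then $h$ is supported on integers all of whose prime factors exceed $y$, and $\sum_n |h(n)| n^{-1}$ can be bounded in terms of $\sum_{p>y}(1-g(p))/p = \tau(\alpha)+O((\log x)^{-\xi})$ once one truncates at $x^\alpha$ (the tail $\sum_{y<p\le x^\alpha}\tfrac{1}{p}$ is $\log(\alpha\log x) - \log((\log x)^\lambda)+O(1) = \log\alpha + (1-\lambda)\log\log x + O(1)$, but the contribution we actually need is the $(1-g(p))/p$ piece, which is exactly $\tau(\alpha)$).

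The main computation is then a Perron-type or Dirichlet-hyperbola argument: $\sum_{n\le x^\alpha} g(n) = \sum_{d\le x^\alpha} h(d)\,\lfloor x^\alpha/d\rfloor = x^\alpha\sum_{d} h(d)/d - x^\alpha\sum_{d>x^\alpha}h(d)/d + O(\sum_{d\le x^\alpha}|h(d)|)$. Dividing by $x^\alpha$, the leading term is $\prod_{p>y}(1-g(p)/p)^{-1}\cdots$ wait — more carefully, $\sum_d h(d)/d = \prod_p (1-p^{-1})(1-g(p)p^{-1})^{-1} = \prod_{p>y}(1-p^{-1})(1-g(p)p^{-1})^{-1}$, and taking logarithms gives $-\sum_{p>y}\big(\tfrac{1-g(p)}{p} + O(p^{-2})\big)$; but this product is over \emph{all} $p>y$, which diverges appropriately, so the truncation at $x^\alpha$ is essential and one works with $\sum_{d\le x^\alpha} h(d)/d$ directly. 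Expanding $\exp(-\tau(\alpha))$ to second order yields $1-\tau(\alpha)+\tfrac12\tau(\alpha)^2+\cdots$; since $0\le 1-g(p)\le 2$, the sign of the cubic and higher terms can be controlled to give the stated one-sided inequalities. The lower bound $1-\tau(\alpha)+O((\log x)^{-\xi})$ comes from the elementary inequality $g(n)\ge 1 - \sum_{p\mid n, p>y}(1-g(p))$ applied to squarefree-ish $n$ (or more precisely, from $\prod(1+a_i)\ge 1+\sum a_i$ when $a_i\ge -1$... here we need the reverse since $g(p)-1\le 0$), giving $M_g(x^\alpha)\ge 1-\tau(\alpha)+O(\text{error})$, while the upper bound needs the second-order term because $\prod(1-b_i)\le 1-\sum b_i+\tfrac12(\sum b_i)^2$ for $0\le b_i\le 1$.

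The error term $O((\log x)^{-\xi})$ with $\xi=\eta(1-\lambda)=2\lambda-1$ is where the choice of $\lambda$ is calibrated, and tracking it is the delicate part. There are two sources of error: (i) the ``big-Oh of the sum of $|h(d)|$'' arising from replacing $\lfloor x^\alpha/d\rfloor$ by $x^\alpha/d$, which is roughly $x^{-\alpha}\sum_{d\le x^\alpha}|h(d)|$ — this is small because $h$ is supported on $y$-rough numbers and such numbers up to $X$ number $O(X(\log x)^{-1+\epsilon})$-ish by Buchstab/Mertens, giving a saving related to $1-\lambda$; and (ii) the tail $\sum_{d>x^\alpha}h(d)/d$, controlled similarly. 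I would invoke the Granville--Soundararajan estimate (their Proposition~1, which is the source of the constant $\eta=\tfrac14-\tfrac1{2\pi}$) precisely here: it gives that the count of $y$-rough numbers up to $z$ with the relevant weighting is $\ll z(\log x)^{-\eta\cdot(\text{something})}$, and combined with the $\lambda$-exponent in $y=\exp((\log x)^\lambda)$ this produces the exponent $\eta(1-\lambda)=\xi$. The constraint $1/\sqrt e\le\alpha\le1$ enters because for $\alpha<1/\sqrt e$ the term $\tau(\alpha)$ can exceed $1$ and the expansion degrades; at $\alpha=1/\sqrt e$ one has $\tau$ close to the threshold value $1$ by Mertens-type estimates, which is exactly the Vinogradov--Burgess barrier.

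\medskip

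The step I expect to be the main obstacle is rigorously bounding the ``rough number'' sums $\sum_{d\le x^\alpha}|h(d)|$ and $\sum_{d> x^\alpha}|h(d)|/d$ with the sharp exponent. The naive bound $\sum_{d}|h(d)|/d \le \exp(\sum_{p>y}(1-g(p))/p)$ is too lossy because the exponent can be of size comparable to $\log\log x$; one must instead use that $h$ is supported on integers \emph{all} of whose prime factors exceed $y$, so that a squarefree $d\le x^\alpha$ in the support has at most $(\log x)/\log y \asymp (\log x)^{1-\lambda}$ prime factors, and then apply a Rankin-type shift or the Granville--Soundararajan smoothing to convert this structural sparsity into the power-saving $(\log x)^{-\xi}$. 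Getting the constant $\eta$ (hence $\xi$) correct, rather than just \emph{some} positive exponent, is the crux — and this is precisely why the paper cites \cite[Proposition~1]{GraSou1} as a principal tool rather than reproving it.
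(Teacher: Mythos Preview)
Your approach is genuinely different from the paper's, and it has a real gap in the upper bound.

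\textbf{What the paper actually does.} The paper does not use the convolution $g=\mathbf 1*h$ or a hyperbola argument at all. Instead it quotes two black boxes from Granville--Soundararajan: Proposition~1 of \cite{GraSou1} gives directly
\[
M_g(x^\alpha)=\sigma(u_\alpha)+O\big((\log x)^{-\xi}\big),
\]
where $u_\alpha=\alpha(\log x)^{1-\lambda}$ and $\sigma$ solves the integral equation $u\sigma(u)=\int_0^u\sigma(u-t)\cX(t)\,dt$ with $\cX(t)=\vartheta(y^t)^{-1}\sum_{p\le y^t}g(p)\log p$. Then Proposition~3.6 of \cite{GraSou1} gives the Bonferroni-type bounds $1-I_1\le\sigma\le 1-I_1+I_2$ for the integrals $I_1,I_2$ defined there; trivially $I_2\le I_1^2$ by dropping the constraint $t_1+t_2\le u$. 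The only computation left is to show $I_1(u_\alpha;\cX)=\tau(\alpha)+O((\log y)^{-1})$, which is a short partial-summation exercise using the Prime Number Theorem. So the $(\log x)^{-\xi}$ error comes \emph{entirely} from the first black box, not from any rough-number count.

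\textbf{Where your upper bound breaks.} Your lower bound via the pointwise inequality $g(n)\ge 1-\sum_{p\mid n,\,p>y}(1-g(p))$ does work (after handling the $O(x^\alpha/y)$ integers whose $y$-rough part is not squarefree), and in fact gives an error $O(1/\log x)$ better than needed. But the upper bound fails as written. If you sum the pointwise inequality
\[
g(n)\le 1-\sum_{p\mid n}b_p+\tfrac12\Big(\sum_{p\mid n}b_p\Big)^2,\qquad b_p:=1-g(p),
\]
over $n\le x^\alpha$, the quadratic term contributes
\[
\frac{x^\alpha}{2}\sum_{p_1,p_2}\frac{b_{p_1}b_{p_2}}{[p_1,p_2]}
=\frac{x^\alpha}{2}\Big(\tau(\alpha)^2+\sum_{y<p\le x^\alpha}\frac{b_p^2}{p}+O(y^{-1})\Big),
\]
and the diagonal piece $\tfrac12\sum_p b_p^2/p$ can be as large as $2\sum_{y<p\le x^\alpha}p^{-1}\asymp(1-\lambda)\log\log x$ (take $g(p)=-1$ for all $p>y$). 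This is not $O((\log x)^{-\xi})$, so the claimed upper bound does not follow. The same obstruction appears if you instead try to truncate the series $\sum_d h(d)/d=1-\tau+T_2-T_3+\cdots$: since $\tau(\alpha)$ may itself be of order $\log\log x$, you cannot control the alternating tail ``by sign'' termwise. One also notes that your stated inequality $\prod(1-b_i)\le 1-\sum b_i+\tfrac12(\sum b_i)^2$ is asserted only for $b_i\in[0,1]$, whereas here $b_i=1-g(p)\in[0,2]$.

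\textbf{On the role of \cite[Proposition~1]{GraSou1}.} You describe it as giving ``the count of $y$-rough numbers up to $z$ with the relevant weighting is $\ll z(\log x)^{-\eta\cdot(\text{something})}$''. That is not what it says: it is a mean-value theorem asserting $M_g(X)=\sigma(\log X/\log y)+O((\log X/\log y)^{-\eta})$ for completely multiplicative $g$ with $g(p)=1$ for $p\le y$. Invoking it correctly would put you exactly on the paper's track; invoking it as a rough-number bound does not produce the error term you need.
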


\begin{proof}
Let $\vartheta$ be the Chebyshev function
$\vartheta(u)\defeq\sum_{p\le u}\log p$, and define
$$
\cX(t)\defeq\frac{1}{\vartheta(y^t)}\sum_{p\le y^t}g(p)\log p.
$$
Put $u_\alpha\defeq(\log x^\alpha)/\log y=\alpha(\log x)^{1-\lambda}$.  Using
\cite[Proposition~1]{GraSou1} and taking into account \eqref{eq:relation},
we derive the estimate
\begin{equation}
\label{eq:hydro}
M_g(x^\alpha)=\sigma(u_\alpha)+O\big((\log x)^{-\xi}\big)
\qquad\big(1/\sqrt{e}\le\alpha\le 1\big),
\end{equation}
where $\sigma$ is the unique solution to the integral equation
\begin{align*}
u\sigma(u)=\sigma*\cX(u)=\int_0^u\sigma(u-t)\cX(t)\,dt\qquad&\text{for}\quad u>1,\\
\text{with the initial condition}\quad\sigma(u)=1\quad&\text{for}\quad 0\le u\le 1.
\end{align*}
Moreover, using \cite[Proposition~3.6]{GraSou1} we see that
$$
1-I_1(u_\alpha;\cX)\le\sigma(u_\alpha)\le 1-I_1(u_\alpha;\cX)+I_2(u_\alpha;\cX),
$$
where
\begin{align*}
I_1(u;\cX)&\defeq\int_1^u\frac{1-\cX(t)}{t}\,dt,\\
I_2(u;\cX)&\defeq\mathop{\int_1^u\int_1^u}\limits_{(t_1+t_2\le u)}
\frac{1-\cX(t_1)}{t_1}\frac{1-\cX(t_2)}{t_2}\,dt_1\,dt_2.
\end{align*}
Removing the condition $t_1+t_2\le u$ we derive that
$I_2(u;\cX)\le I_1(u;\cX)^2$; hence, in view of the trivial
bound $\tau(\alpha)\ll\log\log x$ it suffices to establish the uniform estimate
\begin{equation}
\label{eq:I1est}
I_1(u_\alpha;\cX)=\tau(\alpha)+O\big((\log y)^{-1}\big)
\qquad\big(1/\sqrt{e}\le\alpha\le 1\big).
\end{equation}
For this, put $S(v)\defeq\sum_{p\le v}(1-g(p))\log p$, and note that
\begin{align*}
\tau(\alpha)&=\int_y^{x^\alpha}\frac{dS(v)}{v\log v}
=\bigg[\frac{S(v)}{v\log v}\bigg]_y^{x^\alpha}+
\int_y^{x^\alpha}\frac{S(v)(\log v+1)}{(v\log v)^2}\,dv\\
&=\int_y^{x^\alpha}\frac{S(v)}{v^2\log v}\,dv
+O\big((\log y)^{-1}\big),
\end{align*}
where we have used the bound $S(v)\ll v$. Making the change of variables
$v=y^t$, $dv=y^t\log y\,dt$, and taking into account that
$S(y^t)=\vartheta(y^t)(1-\cX(t))$, we have
\begin{align*}
\tau(\alpha)
&=\int_1^{u_\alpha}\frac{\vartheta(y^t)}{y^t}\frac{1-\cX(t)}{t}\,dt
+O\big((\log y)^{-1}\big).
\end{align*}
The estimate \eqref{eq:I1est} now follows from the Prime Number Theorem
in the form $\vartheta(y^t)=y^t+O(y^t/\log y^t)$.
\end{proof}

The next statement is a variant of \cite[Proposition~7.1]{GraSou2}.

\begin{proposition}
\label{prop:GraSou}
Let $x$ be large, and let $f$ be a completely multiplicative function
such that $-1\le f(n)\le 1$ for all $n\in\NN$.
Then, uniformly for
$1/\sqrt{e}\le\alpha\le 1$, we have
\begin{equation}
\label{eq:Mfxa-est}
\big|M_f(x^\alpha)\big|\le\max\big\{|\delta_1|,\tfrac12+2(\log\alpha)^2\big\}+
O\Big(\max\big\{\big|M_f(x)\big|,(\log x)^{-\xi}\big\}\Big),
\end{equation}
where
$$
\delta_1\defeq1-2\log(1+\sqrt{e})+4\int_1^{\sqrt{e}}\frac{\log u}{u+1}\,du=
-0.656999\ldots.
$$
\end{proposition}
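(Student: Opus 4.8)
The plan is to derive \eqref{eq:Mfxa-est} by interpolating between two régimes separated by the threshold $y=\exp((\log x)^\lambda)$, exactly as in the structure of Lemma~\ref{lem:techlemma}. Write $f=g\cdot h$ in the sense of a multiplicative decomposition: let $g$ be the completely multiplicative function with $g(p)=1$ for $p\le y$ and $g(p)=f(p)$ for $p>y$, so that $g$ agrees with $f$ except on the small primes. The point of Lemma~\ref{lem:techlemma} is that it controls $M_g(x^\alpha)$ in terms of the single quantity $\tau(\alpha)=\sum_{p\le x^\alpha}(1-g(p))/p$, sandwiching it between $1-\tau(\alpha)+O((\log x)^{-\xi})$ and $1-\tau(\alpha)+\tfrac12\tau(\alpha)^2+O((\log x)^{-\xi})$. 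Meanwhile $M_f(x)$ and $M_g(x)$ differ only through the contribution of primes $p\le y$, which one relates via a standard mean-value / convolution identity (the Granville--Soundararajan machinery of \cite[Proposition~7.1]{GraSou2}); the upshot is that $M_g(x)$ is itself controlled by $\max\{|M_f(x)|,(\log x)^{-\xi}\}$ up to the same order of error.

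The next step is to extremize over the free parameter $\tau:=\tau(\alpha)$. From Lemma~\ref{lem:techlemma} we have $M_g(x^\alpha)\in[1-\tau,1-\tau+\tfrac12\tau^2]+O((\log x)^{-\xi})$, so $|M_g(x^\alpha)|$ is at most the larger of $|1-\tau|$ and $|1-\tau+\tfrac12\tau^2|$. One must also feed in a lower bound on $\tau$ coming from the constraint $1/\sqrt e\le\alpha\le 1$ together with the hypothesis that $g(p)=1$ below $y$: since $g(p)\ge-1$, for $x^\alpha\ge x^{1/\sqrt e}$ the tail sum over $y<p\le x^\alpha$ contributes, in the worst case $g(p)=-1$, a term $2\sum_{y<p\le x^\alpha}1/p=2\log(\alpha(\log x)^{1-\lambda}/(\log x)^{\lambda})+o(1)$; more to the point, one tracks $\tau$ through the same change of variables $v=y^t$, $u_\alpha=\alpha(\log x)^{1-\lambda}$ used in the lemma, so that the relevant range of $u_\alpha$ is $[(\log x)^{1-\lambda}/\sqrt e,(\log x)^{1-\lambda}]$, i.e.\ $\log\alpha$ ranges over $[-\tfrac12,0]$. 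Optimizing the two candidate bounds $|1-\tau|$ and $|1-\tau+\tfrac12\tau^2|$ over the admissible $\tau$, the quadratic branch is maximized at the endpoint where it equals $\tfrac12+2(\log\alpha)^2$ (after substituting the worst-case relation between $\tau$ and $\log\alpha$), while the linear branch $|1-\tau|$ is maximized when $\tau$ is as large as possible, which is where $M_g$ is driven most negative — this is precisely where the constant $\delta_1$ enters. Indeed $\delta_1$ is exactly the value of the Granville--Soundararajan extremal function $\sigma(u)$ as $u\to\infty$ for the worst completely multiplicative $g$ (the one behaving like the "pretend" character), and the integral $4\int_1^{\sqrt e}\frac{\log u}{u+1}\,du$ is what one gets by evaluating $I_1$ minus the second-order correction at the relevant scale; so $|\delta_1|$ is the ceiling for the linear régime.

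Concretely I would: (i) state the decomposition $f=g$ off the small primes and quote the mean-value lemma relating $M_f(x)$ to $M_g(x)$; (ii) apply Lemma~\ref{lem:techlemma} to get the $[1-\tau,1-\tau+\tfrac12\tau^2]$ sandwich for $M_g(x^\alpha)$, absorbing the transition error into $O((\log x)^{-\xi})$; (iii) bound $|M_g(x^\alpha)|$ by $\max\{|1-\tau+\tfrac12\tau^2|,|1-\tau|\}$ and then, using $\tfrac12\tau^2\le\tau$ whenever $0\le\tau\le 2$ together with the range of $\log\alpha$, check that the first term is $\le\tfrac12+2(\log\alpha)^2\le\tfrac12+2(\log\alpha)^2$ on the admissible range while the second is $\le|\delta_1|+O((\log x)^{-\xi})$ by comparing with $\sigma(u_\alpha)$ and its limiting value; (iv) fold $M_g(x)$ back to $M_f(x)$ and collect the error terms. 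The main obstacle is step (iii)--(iv): one has to be careful that the parameter $\tau$ appearing in the two inequalities of Lemma~\ref{lem:techlemma} is the \emph{same} $\tau$, so that the worst case for $|M_g(x^\alpha)|$ is genuinely $\max_\tau\min(\text{upper},\text{|lower|})$ rather than a naive maximum of two independent expressions; this forces the splitting of the optimization into the "small $\tau$" régime (where $|1-\tau+\tfrac12\tau^2|$ dominates and is $\le\tfrac12+2(\log\alpha)^2$) and the "large $\tau$" régime (where $|1-\tau|$ dominates and the extremal value is $|\delta_1|$), and verifying the crossover happens exactly at the stated constants requires the explicit evaluation of $\sigma$ via \cite[Proposition~3.6]{GraSou1}. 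Tracking the dependence on $\alpha$ through $u_\alpha$ so that the final bound is uniform for $1/\sqrt e\le\alpha\le 1$ is the remaining bookkeeping.
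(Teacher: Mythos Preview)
Your outline has the right scaffolding (pass to $g$, apply Lemma~\ref{lem:techlemma}, optimize over $\tau$), but there is a genuine gap at the heart of step~(iii): you never obtain the required \emph{lower} bound on $\tau(\alpha)$. You write that a lower bound on $\tau$ comes ``from the constraint $1/\sqrt{e}\le\alpha\le 1$ together with the hypothesis that $g(p)=1$ below $y$'', and then compute the tail ``in the worst case $g(p)=-1$'' --- but that worst case yields an \emph{upper} bound on $\tau$, not a lower one. Nothing in the range of $\alpha$ alone rules out $g(p)=1$ for all $p$, which gives $\tau=0$ and $M_g(x^\alpha)=1+o(1)$, already violating $\tfrac12+2(\log\alpha)^2<1$ for any $\alpha>1/\sqrt{e}$. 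The missing ingredient is that the smallness of $M_f(x)$ must be \emph{fed back through} the lemma. In the paper one first writes $M_f(x^\alpha)=\Theta(f,y)\,M_g(x^\alpha)+O((\log x)^{-\xi})$ via \cite[Proposition~4.4]{GraSou1} and disposes of the case $\Theta(f,y)\le\tfrac12$ trivially (since then $|M_f(x^\alpha)|\le\tfrac12+O(\cdot)\le|\delta_1|+O(\cdot)$). In the remaining case $\Theta(f,y)\in[\tfrac12,1]$ one gets $|M_g(x)|=O(B)$ with $B=\max\{|M_f(x)|,(\log x)^{-\xi}\}$; applying Lemma~\ref{lem:techlemma} \emph{at $\alpha=1$} then forces $1-\tau(1)\le M_g(x)+O(\cdot)$, i.e.\ $\tau(1)\ge 1+O(B)$. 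Mertens gives $\tau(1)-\tau(\alpha)\le -2\log\alpha+O(\cdot)$, hence $\tau(\alpha)\ge 1+2\log\alpha+O(B)$, and \emph{this} is the lower bound that makes your optimization work: $t\mapsto 1-t+\tfrac12 t^2$ is decreasing on $[0,1]$, so its maximum over $t\ge 1+2\log\alpha$ in that interval is exactly $\tfrac12+2(\log\alpha)^2$.

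A second, related gap concerns the $|\delta_1|$ bound. The sandwich of Lemma~\ref{lem:techlemma} by itself cannot produce $|\delta_1|$ in the large-$\tau$ r\'egime: for $\tau\ge 1$ the interval $[\,1-\tau,\,1-\tau+\tfrac12\tau^2\,]$ can be wide (e.g.\ $[-1,1]$ at $\tau=2$), so neither $|1-\tau|$ nor $|1-\tau+\tfrac12\tau^2|$ is bounded by $|\delta_1|$ over the admissible $\tau$. The paper does not try to squeeze $|\delta_1|$ out of the sandwich; instead, once $\tau(\alpha)\ge 1$, it abandons Lemma~\ref{lem:techlemma} and appeals directly to \cite[Theorem~5.1]{GraSou1} through the identity $M_g(x^\alpha)=\sigma(u_\alpha)+O((\log x)^{-\xi})$ of \eqref{eq:hydro} to conclude $|\sigma(u_\alpha)|\le|\delta_1|$. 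Your proposal gestures at $\sigma$ and its extremal value, but frames the conclusion as a bound on $|1-\tau|$, which is not the mechanism.
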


\begin{proof}
We follow the proof of \cite[Proposition~7.1]{GraSou2} closely, making use of
the work in \cite{GraSou1}.  Let
$y\defeq \exp\big((\log x)^\lambda\big)$, and let $g$ be
the completely multiplicative function defined by
$$
g(p)\defeq\begin{cases}
1&\hbox{if $p\le y$},\\
f(p)&\hbox{if $p>y$}.
\end{cases}
$$
Using \cite[Proposition~4.4]{GraSou1} (with $S=[-1,1]$ and $\varphi=\pi/2$)
 and taking into account \eqref{eq:relation},
we derive the estimate
$$
M_f(x^\alpha)=\Theta(f,y)M_g(x^\alpha)+O\big((\log x)^{-\xi}\big),
$$
where
$$
\Theta(f,y)\defeq\prod_{p\le y}\bigg(1-\frac1p\bigg)\bigg(1-\frac{f(p)}{p}\bigg)^{-1}.
$$
Since $\big|M_g(x^\alpha)\big|\le 1$,
we obtain \eqref{eq:Mfxa-est} whenever $\Theta(f,y)\le\frac12$;
thus, we can assume without loss of generality that
$\Theta(f,y)\in[\frac12,1]$, and it suffices to show that
\begin{equation}
\label{eq:Mgxa-est}
\big|M_g(x^\alpha)\big|\le\max\big\{|\delta_1|,\tfrac12+2(\log\alpha)^2\big\}+O(B)\end{equation}
holds uniformly for $1/\sqrt{e}\le\alpha\le 1$,
where
$$
B\defeq\max\big\{\big|M_f(x)\big|,(\log x)^{-\xi}\big\}.
$$

Applying Lemma~\ref{lem:techlemma} with $\alpha=1$, we have
$$
\tau(1)\ge 1+O(B).
$$
Further, by Mertens' theorem we have for $1/\sqrt{e}\le\alpha\le 1$:
$$
\tau(1)-\tau(\alpha)=\sum_{x^\alpha<p\le x}\frac{1-g(p)}{p}
\le\sum_{x^\alpha<p\le x}\frac{2}{p}=2\log\alpha+O\big((\log x)^{-1}\big).
$$
Consequently,
$$
\tau(\alpha)\ge 1-2\log\alpha+O(B).
$$

Using \cite[Theorem~5.1]{GraSou1} together with \eqref{eq:hydro},
if $\tau(\alpha)\ge 1$ we have
$$
\big|M_g(x^\alpha)\big|\le|\delta_1|+O\big((\log x)^{-\xi}\big)
=|\delta_1|+O(B).
$$
On the other hand, if $1-2\log\alpha+O(B)\le\tau(\alpha)\le 1$
we can apply Lemma~\ref{lem:techlemma} again to conclude that
\begin{align*}
\big|M_g(x^\alpha)\big|
&\le 1-\tau(\alpha)+\tfrac12\tau(\alpha)^2+O(B)
\le\tfrac12+2(\log\alpha)^2+O(B).
\end{align*}
Putting these estimates together, we obtain \eqref{eq:Mgxa-est},
which finishes the proof.
\end{proof}

\begin{proof}[Proof of Theorem~\ref{thm:y/(log y)^eps}]
Let $\chi\defeq(\cdot|p)$, and let $\eps\in(0,\xi]$ and $\theta\in\{\pm 1\}$
be fixed. Since
\begin{equation}
\label{eq:fork}
\#\big\{n\le y:\chi(n)=\theta\big\}
=\tfrac12y\big(1+\theta M_\chi(y)+O(p^{-1})\big),
\end{equation}
the result is easily proved for $y>p$ (e.g., using
Proposition~\ref{prop:MchiN}).  Thus, we can assume
$y\le p$ in what follows. Moreover, it suffices to prove
the theorem for all sufficiently large primes $p$
(depending on $\eps$).

Let $\alpha\in[\tfrac{1}{\sqrt{e}},1]$ and put $x\defeq y^{1/\alpha}$.
Note that $\log p\asymp\log x\asymp\log y$ since $p^{(4\sqrt{e})^{-1}}\le y\le p$.
Applying Proposition~\ref{prop:MchiN},
the bound $M_\chi(x)\ll (\log x)^{-\eps}$ holds provided that
\begin{equation}
\label{eq:check}
x\ge p^{1/4}\exp\big(\eps^{-1/2}\sqrt{\log p\log\log p}\big),
\end{equation}
which we assume for the moment. Since $\eps\le\xi$,
Proposition~\ref{prop:GraSou} yields the bound
$$
\big|M_\chi(y)\big|=\big|M_\chi(x^\alpha)\big|
\le\max\big\{|\delta_1|,\tfrac12+2(\log\alpha)^2\big\}+c_1(\log p)^{-\eps}
$$
with some number $c_1>0$ that depends only on $\eps$.
Taking $\alpha\defeq \tfrac{1}{\sqrt{e}}+c_1(\log p)^{-\eps}$,
for all sufficiently large $p$ (depending on $\eps$) we have
$$
\big|M_\chi(y)\big|
\le 1-(2\sqrt{e}-1)c_1(\log p)^{-\eps}+
O_\eps\big((\log x)^{-2\eps}\big).
$$
In particular, for some sufficiently large $c_2>0$ (depending on $\eps$) the bound
$$
\big|M_\chi(y)\big|=\big|M_\chi(x^\alpha)\big|\le 1-c_2(\log y)^{-\eps}
$$
holds.  In view of \eqref{eq:fork}
we obtain the stated result.

To verify \eqref{eq:check}, observe that
$\alpha^{-1}\ge\sqrt{e}-c_3(\log p)^{-\eps}$ with some number $c_3>0$
that depends only on $\eps$. If $c>0$ 
and $y\ge p^{(4\sqrt{e})^{-1}}e^{c(\log p)^{1-\eps}}$, then
\begin{align*}
\log x=\alpha^{-1}\log y&\ge 
\big(\tfrac1{4\sqrt{e}}\log p+c(\log p)^{1-\eps}\big)
\big(\sqrt{e}-c_3(\log p)^{-\eps}\big)\\
&=\tfrac14\log p+\big(c\sqrt{e}-\tfrac{c_3}{4\sqrt{e}}
-cc_3(\log p)^{-\eps}\big)(\log p)^{1-\eps}.
\end{align*}
Hence, if $c$ and $p$ are large enough, depending only on $\eps$, then
$$
\log x\ge \tfrac14\log p+\eps^{-1/2}\sqrt{\log p\log\log p}
$$
as required.
\end{proof}

\section{Proof of Theorem~\ref{thm:many k}}

Let $C>0$ be a fixed (absolute) constant to be determined below.
Put
$$
E\defeq p^{(4\sqrt{e})^{-1}}
\exp\big(\sqrt{e^{-1}\log p\log\log p}\,\big)\mand
B\defeq E^{1/2}(\log p)^{1/2}.
$$
Let $N\defeq n_1(p)$ and $M\defeq n_k(p)$, where $k$ is a 
positive integer such that
\begin{equation}
\label{eq:kcond}
k\le CE^{1/2}(\log p)^{-1/2}.
\end{equation}
To prove the theorem we need to show that $M\ll E$.

\emph{Case 1: $N\le B$}.
If the interval $[1,2k]$ contains at least $k$ nonresidues, then
$$
M\le 2k\ll E^{1/2}(\log p)^{-1/2}\ll E
$$
and we are done.  If the interval $[1,2k]$ contains fewer than
$k$ nonresidues, then $[1,2k]$ contains at least $k$ residues $m_1,\ldots,m_k$.  Therefore,
$Nm_1,\ldots,Nm_k$ are all nonresidues in $[1,2kB]$,
and we have (using \eqref{eq:kcond} and the definition of $B$)
$$
M\le 2kB\ll E.
$$

\emph{Case 2: $N>B$}.
Applying Theorem~\ref{thm:y/(log y)^eps} with $\eps\defeq\xi$,
$y\defeq B^{5/2}$ and $\theta\defeq-1$, 
there is an absolute constant $c_1>0$ such that
$$
\#\big\{n\le B^{5/2}:(n|p)=-1\big\}\gg\frac{B^{5/2}}{(\log B)^\xi}
$$
provided that
$$
B^{5/2}\ge p^{(4\sqrt{e})^{-1}}\exp\big(c_1(\log p)^{1-\xi}\big).
$$
Since $B^{5/2}>E^{5/4}$ the latter inequality is easily satisfied
for all large $p$; thus, if $p$ is large enough, then
the $k$-th nonresidue $M=n_k(p)$ satisfies
$$
N\le M\le B^{5/2}<N^{5/2}.
$$

Let $x\in(M,N^3)$, and note that $\log x\asymp\log p\asymp\log N$.
Following an idea of Vinogradov, we see that the inequality $x<N^3$
implies that every nonresidue $n\le x$ can be uniquely represented in the
form $n=qm$, where $q$ is a prime nonresidue, and $m$ is a positive integer residue
not exceeding $x/q$; this leads to the lower bound
$$
\sum_{n\le x}(n|p)
\ge x-2\sum_{\substack{N\le q\le x\\(q|p)=-1}}\frac{x}{q}+O(1).
$$
Since $M=n_k(p)$, there are at most $k$ prime nonresidues in $[N,M]$; thus,
$$
\sum_{n\le x}(n|p)
\ge x-\frac{2kx}{N}
-2\sum_{M<q\le x}\frac{x}{q}+O(1).
$$
Recalling that $N>B=E^{1/2}(\log p)^{1/2}$
and using \eqref{eq:kcond} together
with the Prime Number Theorem, we derive the lower bound
$$
\sum_{n\le x}(n|p)\ge 
x \bigg(1 -\frac{2C}{\log p}
- 2 \log \frac{\log x}{\log M}\bigg)
+ O\bigg(\frac{x}{(\log x)^{100}}\bigg).
$$
Now let $x\defeq e^{-3C}M^{\sqrt{e}}$.
Since $-2\log(1-t)\ge 2t$ for all $t\in[0,\tfrac12]$, for any
sufficiently large $p$ (depending on the choice of $C$) we have
$$
1-2\log \frac{\log x}{\log M}
=-2\log\bigg(1-\frac{3C}{\sqrt{e}\log M}\bigg)
\ge\frac{6C}{\sqrt{e}\log M},
$$
and thus
$$
\frac1x\sum_{n\le x}(n|p)\ge\frac{6C}{\sqrt{e}\log M}-\frac{2C}{\log p}
+ O\bigg(\frac{1}{(\log x)^{100}}\bigg).
$$
Since $M\le B^{5/2}\le p$ for all large $p$, it follows that
$$
\frac1x\sum_{n\le x}(n|p)\ge\frac{C}{\log p}
$$
if $p$ is large enough (depending on $C$). On the other hand,
using Proposition~\ref{prop:MchiN} with $c=1$, we see that
there is an absolute constant $C_0>0$ such that
$$
\frac1x\sum_{n\le x}(n|p)\le\frac{C_0}{\log p}
$$
whenever $x\ge p^{1/4}e^{\sqrt{\log p\log\log p}}$.  If $C$ is initially
chosen so that $C>C_0$, then these two bounds are incompatible unless
$$
e^{-3C}M^{\sqrt{e}}<p^{1/4}\exp\big(\sqrt{\log p\log\log p}\,\big).
$$
The theorem follows.

\end{document}